 \theoremstyle{theorem}
\newtheorem{lemma}{Lemma}
\newtheorem{theorem}{Theorem}[section]
\newtheorem{corollary}[theorem]{Corollary}
\newtheorem{proposition}[theorem]{Proposition}
\theoremstyle{remark}
\newcommand{\C}{\mathbb{C}}
\newcommand{\T}{\mathbb T}
\newcommand{\R}{\mathbb R}
\newcommand{\Z}{\mathbb Z}
\def\S{{\mathcal S}}
\def\A{{\mathcal A}}
\def\T{{\mathbb T}}
\newcommand{\veps}{\varepsilon}
\newcommand{\norm}[1]{{\left\|{#1}\right\|}} 
\newcommand{\beq}{\begin{equation}}
\newcommand{\eeq}{\end{equation}}
\newcommand{\scal}[1]{{\left\langle{#1}\right\rangle}}
\begin{document}

\title[Discrete  Schr\"odinger Evolutions]{ Uniqueness for Discrete Schr\"{o}dinger Evolutions}

\author[Ph. Jaming]{Philippe Jaming}
\address{Institut de Math\'ematiques de Bordeaux UMR 5251,
Universit\'e de Bordeaux, cours de la Lib\'eration, F 33405 Talence cedex, France}
\email{philippe.jaming@math.u-bordeaux1.fr}

\author[Yu. Lyubarskii]{Yurii Lyubarskii}
\address{Department of Mathematics,
Norwegian University of Science and Technology
7491, Trondheim, Norway}
\email{yura@math.ntnu.no}

\author[E. Malinnikova]{Eugenia Malinnikova}
\email{eugenia@math.ntnu.no}

\author[K.-M. Perfekt]{Karl-Mikael Perfekt}
\email{karl-mikael.perfekt@math.ntnu.no}

\thanks{
Ph.J. kindly acknowledge financial support from the French ANR programs ANR
2011 BS01 007 01 (GeMeCod), ANR-12-BS01-0001 (Aventures).
This study has been carried out with financial support from the French State, managed
by the French National Research Agency (ANR) in the frame of the ”Investments for
the future” Programme IdEx Bordeaux - CPU (ANR-10-IDEX-03-02).\\
E.M.  and Yu.L. were supported by Project 213638 of the Research Council of Norway.\\
This research was sponsored by the French-Norwegian PHC AURORA 2014 PROJECT N° 31887TC, N~233838, {\it CHARGE}} 
\begin{abstract} We prove that if a solution of the discrete time-dependent Schr\"odinger equation with bounded real potential decays fast at two distinct times then the solution is trivial. For the free Shr\"odinger operator and for operators with compactly supported time-independent potentials a sharp analog of the Hardy uncertainty principle is obtained, using an argument based on the theory of entire functions. Logarithmic convexity of weighted norms is employed in the case of general real-valued time-dependent bounded potentials. In the latter case the result is not optimal.    
\end{abstract}

\maketitle

\section{Introduction}

The aim of this paper is to show that a non-trivial solution of a semi-discrete Shr\"odinger equation with bounded real potential cannot have arbitrarily fast decay at two different times. For the free evolution (with no potential) the result we obtain is precise and it can be interpreted as a discrete version of the dynamical Hardy Uncertainty Principle.

The usual formulation of the Uncertainty Principle is that a function and its Fourier transform 
can not both be arbitrarily well localized. In Hardy's 
Uncertainty Principle, the localization is measured in terms of speed of decay at infinity:
{\sl if $f \in L^2(\mathbb{R})$ is such that $f$ and its Fourier transform $\hat{f}$ satisfy
\[
|f(x)|\le C\exp(-\pi |x|^2), \quad |\hat {f}(\xi)|\le C\exp(-\pi |\xi|^2), \quad x, \xi \in \mathbb{R},
\] 
for some constant $C > 0$, then there is a constant $A$ such that $f(x)=A\exp(-\pi |x|^2)$.}

It is known 
that Uncertainty Principles may also be given dynamical interpretations in terms of solutions of the free Schr\"odinger Equation \cite{Ja,EKPV,F}. Hardy's Uncertainty Principle is equivalent to the following statement:\\ 
($\ast$) {\sl if $u(t,x)$ is a solution of the free Schr\"odinger equation $\partial_t u=i\Delta u$ and $|u(0,x)|+|u(1,x)|\le C\exp(-x^2/4)$, then $u(0,x)=A\exp(-(1+i)x^2/4)$.}\\
The point is that the free Shr\"odinger Equation can be explicitly solved via the Fourier transform, from which the two formulations of the Hardy Uncertainty Principle are seen to be equivalent. 

In a remarkable series of papers, 
L. Escauriaza, C. E. Kenig, G. Ponce and L.~Vega \cite{EKPV0,EKPV,EKPV1,EKPV2}, and also in collaboration with Cowling \cite{CEKPV}, have extended the uniqueness statement ($\ast$) to solutions of Schr\"odinger equations with potentials, as well as to solutions of a wider class of partial differential equations that even includes some non-linear equations. Further results for covariant Schr\"odinger evolutions were obtained in \cite{BFGRV,CF}. Concerning the discrete setting, we note that a discrete dynamical interpretation of the Heisenberg uncertainty principle was given in \cite{F}.
 
In the present work we obtain uniqueness results for solutions of the discrete time-dependent Schr\"{o}dinger equation
\begin{equation}
 \label{eq:schrod}
\partial_t u=i(\Delta_d u+Vu),
\end{equation}
where $u:\R_+\times\Z\to\C$ and the potential $V=V(t,n)$ is a real-valued bounded function. $\Delta_d $  is the discrete Laplacian; for a complex valued function $f:\Z\to\C$, 
\[
\Delta_d f(n):=f(n+1)+f(n-1)-2f(n).
\]
We say that $u = u(t,n)$ is a strong solution of \eqref{eq:schrod} if $u \in C^1([0,\infty),\ell^2)$. In the case that $V = V(n)$ is time-independent, $H=\Delta_d+V$ is a bounded self-adjoint operator in $\ell^2$, and for any $u(0,\cdot)\in \ell^2$ there exists a unique strong solution of  \eqref{eq:schrod} with $u(0,\cdot)$ as the initial   value:
$u(t,\cdot)=e^{itH}u(0,\cdot)$. If the potential $V$ is time-dependent but real, the $\ell^2$-norm of a solution is preserved.

For the free evolution we obtain what can be considered the discrete analog of the dynamical interpretation of Hardy's Uncertainty Principle, which we then extend to equations with bounded real potentials. The results bear similarities to the continuous case, but there are at the same time fundamental differences. For instance, in the setting of free evolution, in both the continuous and the discrete case the optimal decay is given by the heat kernel at time $1$.  However, this means that the critical decay is different for the two situations.  For the continuous case the standard heat kernel is $k(1,0,x)=(4\pi)^{-1/2}\exp(-x^2/4)$, while for the discrete case the heat kernel is $K(1,0,n)=e^{-1}|I_n(1)|\asymp e^{-1}(n!2^n)^{-1}$, where $I_n$ are the modified Bessel functions, $I_n(z)=(-i)^nJ_n(iz)$. Computations of the discrete heat kernel for the lattice and asymptotic connecting the two cases can be found in\cite{CY1,CY2}.  Discrete heat kernels also appeared as weights for convexity results for discrete harmonic functions in recent work by G. Lippner and D. Mangoubi \cite{LM}.

To finish the introduction we now describe our main results in greater detail. 
First, we prove that
if $u(t,n)$ solves the free equation 
$$
\partial_t u = i \Delta_d u
$$ 
and satisfies 
\begin{equation} \label{eq:schroddec}
|u(0,n)|+|u(1,n)|\le C \frac{1}{\sqrt{|n|}}\left(\frac{e}{2|n|}\right)^{|n|},\quad n\in\Z\setminus\{0\},
\end{equation}
then $u(t,n)=Ai^{-n} e^{-2it}J_n(1 - 2t)$, where $J_n$ is the Bessel function. This result is sharp:   $|J_n(-1)|$ and $|J_n(1)|$ have precisely the growth of the right hand side in  \eqref{eq:schroddec} as $|n| \to \infty$, see Proposition \ref{prop:free}.

Then we investigate the discrete equation \eqref{eq:schrod} with bounded potential in essentially two different ways using techniques of complex and real analysis.  First, applying the theory of entire functions, we establish that if $V(n)$ is a compactly supported potential and $u$ is a strong solution of \eqref{eq:schrod} satisfying the one-sided estimates
\begin{equation*}
|u(t,n)|\le C\left(\frac{e}{(2+\epsilon)n}\right)^n,\quad  n>0,\ t\in\{0,1\},
\end{equation*} 
for some $\epsilon > 0$, then $u \equiv 0$. In the continuous setting, one-sided Hardy uncertainty principles have previously appeared in works of Nazarov \cite{N} and Demange \cite{D}. The corresponding results for continuous Schr\"odinger evolutions can be found in the recent survey \cite{EKPV2}.

In the second part of the paper, we  use the real-variable approach following  \cite{EKPV}. The main idea is to construct a weight function $\psi(t,n)$ which provides the  logarithmic convexity  of the weighted $\ell^2$ norms $\|\psi(t,\cdot) u(t,\cdot)\|_{\ell^2(\mathbb{Z})}$, where $u(t,n)$ is a strong solution of \eqref{eq:schrod}.  This line of reasoning has its roots in celebrated results of T. Carleman and S. Agmon; the technique of Carleman estimates goes back to \cite{C} and convexity principles for elliptic operators were described in \cite{A}.
The method allows us to consider  general bounded potentials $V$, at the cost of having to assume stronger decay of $u(0,n)$  and $u(1,n)$  in both directions $n\to \pm \infty$.  The main result, Theorem \ref{thm:main}, says that if 
\[
\norm{(1+|n|)^{\gamma(1+|n|)}u(0,n)}_{\ell^2(\mathbb{Z})}, \,\norm{(1+|n|)^{\gamma(1+|n|)}u(1,n)}_{\ell^2(\mathbb{Z})}< \infty
\]
for some $\gamma > (3+\sqrt{3})/2$, then $u \equiv 0$.
We don't expect this result to be sharp, but it does provide a universal decay condition which implies uniqueness of solutions of Schr\"odinger equations with general bounded potentials.

The paper is organized as follows: in Section 2 we discuss preliminaries of entire functions and use them to obtain our first results. Section 3 contains a precursory energy estimate for solutions of \eqref{eq:schrod}, which we need in order to justify the validity of many of our computations. Section 4  splits into several subsections discussing and proving the logarithmic convexity results we require, and in the final subsection the main result is proven.

Note that we will use the symbol $C$ to denote various constants in what follows. 
Unless otherwise indicated, its value might change from line to line. Also all $\norm{\, \cdot \,}_2$-norms are to be understood as the $\ell^2$-norm in the variable $n$.

\section{A uniqueness result for Schr\"{o}dinger operators with compactly supported potentials}

In this section, we use methods from complex analysis. For the reader's convenience,
we begin by briefly outlining some definitions and facts on entire functions of exponential
type that we need. Details can be found in \cite{Lbook} (see in particular Lectures 8 and 9).
Recall that an entire function $f$ is said to be of exponential type if for some $k>0$
\begin{equation}\label{eq:exptype}
|f(z)|\le C\exp(k|z|).
\end{equation}  
In this case the type of an entire function $f$ is defined by 
\begin{equation}
\sigma =\limsup_{r\to\infty}\frac{\log \max\{|f(re^{i\phi})|;\phi\in[0,2\pi]\}}{r} <\infty.
\end{equation}
In particular, an entire function $f$ is of zero exponential type if for any $k>0$ there exists $C=C(k)$ such that \eqref{eq:exptype} holds.

Let $f(z)$ be an entire function of exponential type, $f(z)=\sum_{n=0}^\infty c_nz^n$. Then the type of $f$ can be  expressed in terms of the Taylor coefficients in the following way
\begin{equation}\label{eq:coeftype}
 \limsup_{n\to\infty} n|c_n|^{1/n} = e\sigma. 
 \end{equation}

The growth of a function $f$ of exponential type along different directions is described  by the indicator function
\[
h_f(\varphi)=\limsup_{r\to\infty}\frac{\log|f(re^{i\varphi})|}{r}.
\]
This function is the support function of some convex compact set $I_f\subset \C$ which is called the indicator diagram of $f$. In particular
\beq\label{eq:if}
h_f(\varphi)+h_f(\pi-\varphi)\ge 0.
\eeq 
For example the indicator function of $e^{az}$ for $a\in \C$ is $h(\varphi)=\Re(ae^{i\varphi})$ and its indicator diagram consists of 
a single  point, $\bar{a}$. 

Clearly, $h_{fg}(\varphi)\le h_f(\varphi)+h_g(\varphi)$, implying that
\[
I_{fg}\subset I_f+I_g:=\{z=z_1+z_2: z_1\in I_f, z_2\in I_g\}.
\] 

Recall that   the Bessel functions $J_n$ satisfy
\[
\exp(x(z-z^{-1})/2)=\sum_{n=-\infty}^\infty J_n(x)z^n, \quad z \neq 0.
\]
Moreover, for fixed $x$, 
$$
|J_n(x)|\sim \frac{1}{\sqrt{|n|}} \left( \frac{ex}{2|n|}\right)^{|n|} \ \mbox{as} \   |n| \to \infty.
$$

Our first observation is the following discrete analog of the classical Hardy uncertainty principle.
\begin{proposition} \label{prop:free}
Let $u\in C^1([0,1],\ell^2)$ satisfy the discrete free Schr\"{o}dinger equation $\partial_t u=i\Delta_d u$,   and suppose that 
\begin{equation}
\label{eq:2f}
|u(0,n)|, \ |u(1,n)|\le C \frac{1}{\sqrt{|n|}}\left(\frac{e}{2|n|}\right)^{|n|},\quad n\in\Z\setminus\{0\}
\end{equation}
for some $C > 0$.  
Then 
$u(t,n)=Ai^{-n} e^{-2it}J_n(1 - 2t)$ for all $n\in\Z$ and $0\le t\le 1$, for some constant $A$.

\end{proposition}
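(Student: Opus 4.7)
The plan is to pass to the generating-function side. Set $U_t(z) := \sum_n u(t,n) z^n$; by \eqref{eq:2f} together with Stirling and \eqref{eq:coeftype}, $U_t$ is a Laurent series analytic on $\C \setminus \{0\}$, and
\[
|U_0(z)|,\, |U_1(z)| \;\le\; C\bigl(e^{|z|/2} + e^{1/(2|z|)}\bigr),\qquad z\in\C\setminus\{0\}.
\]
The discrete Schr\"odinger equation translates into $\partial_t U_t = i(z + z^{-1} - 2)U_t$, so $U_t(z) = U_0(z)\exp(it(z+z^{-1}-2))$, and in particular $U_1(z) = e^{-2i}U_0(z)\exp(i(z+z^{-1}))$. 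Since the candidate solution corresponds to $U_0(z) = Ae^{-i(z+z^{-1})/2}$, the right object to study is
\[
\Phi(z) \;:=\; U_0(z)\exp\!\bigl(i(z+z^{-1})/2\bigr) \;=\; e^{2i}U_1(z)\exp\!\bigl(-i(z+z^{-1})/2\bigr),
\]
and the proposition reduces to showing $\Phi\equiv A$.

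At $z = re^{i\varphi}$ one has $|\exp(\pm i(z+z^{-1})/2)| = \exp(\mp\sin\varphi(r-1/r)/2)$, so each of the two representations of $\Phi$ is small in one half of the punctured plane. Taking the pointwise minimum over the two expressions yields the indicator bound $h_\Phi(\varphi) \le (1-|\sin\varphi|)/2$ both as $|z|\to\infty$ and, in $1/|z|$, as $z\to 0$. The complementary choice -- using whichever representation has its \emph{large} exponential factor cancelled by the type-$1/2$ growth of $U_0$ or $U_1$ -- yields the crucial uniform bound $|\Phi(iy)| \le C$ for every $y\in\R\setminus\{0\}$: for $|y|\ge 1$ the $U_0$-formula contributes the decay $e^{-(y-1/y)/2}$ that swallows the $e^{|y|/2}$ growth of $U_0$, and for $|y|\le 1$ the $U_1$-formula does the analogous job cancelling the $e^{1/(2|y|)}$ contribution coming from the principal part of $U_1$.

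Next decompose $\Phi(z) = \Phi^+(z) + G(1/z)$ with $\Phi^+$ entire and $G$ entire with $G(0)=0$; Cauchy on optimally chosen circles $|z| = R$ converts the growth bounds into the fact that $\Phi^+$ and $G$ are of exponential type $\le 1/2$. From $h_{\Phi^+}(\varphi) \le (1-|\sin\varphi|)/2$ the indicator diagram $I_{\Phi^+}$ lies in
\[
K \;=\; \bigl\{\zeta\in\C : \Re(\zeta e^{-i\varphi}) \le \tfrac{1-|\sin\varphi|}{2} \text{ for every } \varphi\bigr\};
\]
specializing to $\varphi = \pm\pi/2$ forces $I_{\Phi^+}\subset\R$, and then letting $\varphi \to \pi/2$ from either side together with $(1-\sin\varphi)/(2\cos\varphi) = \sqrt{1-\sin\varphi}/(2\sqrt{1+\sin\varphi}) \to 0$ collapses $K$ to $\{0\}$, so $\Phi^+$ is of minimal exponential type. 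The previous step gives $|\Phi(iy)|\le C$, and since $G$ is continuous on the closed unit disc, $|G(-i/y)|\le \max_{|w|\le 1}|G|$ whenever $|y|\ge 1$, while $\Phi^+$ is bounded on $\{|z|\le 1\}$ by continuity; together these show $|\Phi^+(iy)|$ is uniformly bounded in $y$. Phragm\'en--Lindel\"of in the right and left half-planes then yields $|\Phi^+(z)|\le M e^{\veps|\Re z|}$ for every $\veps>0$, whence $|\Phi^+|\le M$ on $\C$ and Liouville gives $\Phi^+\equiv \phi_0$. Knowing $\Phi^+$ is constant, $|G(is)| = |\Phi(-i/s) - \phi_0| \le C$ for every $s\ne 0$, and the identical argument applied to $G$, together with $G(0)=0$, forces $G\equiv 0$. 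Hence $\Phi\equiv \phi_0 =: A$, so $U_0(z) = Ae^{-i(z+z^{-1})/2}$, and expanding $U_t(z) = Ae^{-2it}e^{i(2t-1)(z+z^{-1})/2}$ by Jacobi--Anger produces $u(t,n) = Ai^{-n}e^{-2it}J_n(1-2t)$ for every $t\in[0,1]$.

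The crux is the uniform estimate $|\Phi(iy)|\le C$ for all $y\ne 0$, since everything downstream (the collapse of the indicator diagram to $\{0\}$ is only useful once this is known) hinges on being able to run Liouville on the imaginary axis. This is where the sharp constant $1/2$ in \eqref{eq:2f} -- i.e.\ the $1/\sqrt{|n|}$ prefactor that makes $|U_0(z)| \le Ce^{|z|/2}$ hold with a fixed $C$ -- cannot be weakened: a slack $|U_0(z)| \le C_\veps e^{(1/2+\veps)|z|}$ would leave a factor $e^{\veps|y|}$ on the imaginary axis and defeat the Phragm\'en--Lindel\"of/Liouville step.
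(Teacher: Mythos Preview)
Your proof is correct and follows essentially the same approach as the paper: pass to the generating function, normalize by $e^{i(z+z^{-1})/2}$ to obtain a function (the paper calls it $g$) whose Laurent parts are entire of minimal exponential type, verify boundedness on the imaginary axis from the two representations, and conclude via Phragm\'en--Lindel\"of and Liouville. The only cosmetic differences are that the paper first pins down the indicator diagrams of the Laurent parts of $U_0,U_1$ themselves (forcing $I_0^\pm=\{-i/2\}$ from the translation relation $I_1^\pm\subset I_0^\pm+i$ together with the radius-$1/2$ constraint) before passing to $g$, whereas you bound $h_\Phi$ directly and collapse the resulting convex set to $\{0\}$; note also that your description of which representation gives decay on the imaginary axis is written for $y>0$ and has the roles of $U_0$ and $U_1$ swapped when $y<0$, a harmless slip.
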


\begin{proof}
Consider the discrete Fourier transforms of $u(t,\cdot)$,
$$
\Phi(t,\theta)=\sum_{k=-\infty}^\infty u(t,k)\theta^{k} \ \in L^2(\T).
$$
We have  $\partial_t \Phi(t,\theta)=i(\theta+\theta^{-1}-2)\Phi(t,\theta)$.
Thus 
$$
\Phi(t,\theta)=e^{i(\theta+\theta^{-1}-2)t}\Phi(0,\theta),
$$ 
 and in particular 
\begin{equation}
\label{eq:01}
\Phi(1,\theta)=e^{i(\theta+\theta^{-1}-2)}\Phi(0,\theta).
\end{equation}
It follows from \eqref{eq:2f} that the functions $\theta\mapsto\Phi(s,\theta)$, for $s=0$ and $s=1$, admit holomorphic extensions to $\C\setminus\{0\}$:
\begin{equation}
\label{eq:extension}
\Phi(s,\theta)=\sum_{k<0}u(k,s)\theta^k + \sum_{k\geq 0}u(k,s)\theta^k = \Phi^-(s,\theta) +  \Phi^+(s,\theta), \ s\in\{0,1\}.
\end{equation}
Furthermore, \eqref{eq:2f} implies that  $ \Phi^+(s,\theta)$ and $ \Phi^-(s,1/\theta)$, $s=0$ and $s=1$,  are entire functions of exponential type 
whose indicator diagrams $I_s^+$ and $I_s^-$, respectively,  are contained in the disk of radius $1/2$ centered at zero. Actually one can say more: 
\begin{equation}
\label{eq:01a}
| \Phi^+(s,\theta)|, \  | \Phi^-(s,1/\theta) |  \leq C e^{|\theta|/2},\quad s\in\{0,1\}.
\end{equation}
This follows from the fact that the right-hand side of \eqref{eq:2f} is asymptotically equivalent to the coefficients in the Taylor expansion of
$\exp(z/2)$. On the other hand it follows from \eqref{eq:01} that  $I_1^\pm \subset I_0^\pm+i$. Thus $I_0^\pm=\{-i/2\}$ and $I_1^\pm=\{i/2\}$.

Now let
\begin{equation}
\label{eq:01b}
g(z)=g^+(z)+g^-(z)= e^{i(z+z^{-1})/2}\Phi(0,z)=e^{-i(z+z^{-1})/2}\Phi(1,z),
\end{equation}
where, as before, $g^\pm$ are the parts  of the  Laurent  series of $g$ with respectively non-negative and negative powers.    
It follows that the indicator diagrams $I^\pm$ of the entire functions $g^+(z)$ and $g^-(1/z)$ coincide with $\{0\}$, so 
$g^+(z)$ and $g^-(1/z)$ are entire functions of type zero.  

The relations \eqref{eq:01a} and \eqref{eq:01b} now yield that $g^+(iy)$ and $g^-(1/iy)$ are bounded for $y\in \R\setminus \{0\}$ and by the  Phragmen-Lindel\"{o}f  principle (see \cite{Lbook}, Lecture 6)  $g^+$, $g^-$, and hence   $g$,  are  constants.
Finally  $\Phi_0(z)=A\exp(-i(z+z^{-1})/2)$, yielding the required expression  for $u(t,n)$.
\end{proof}

\begin{corollary}
Let  $u$ be as in Proposition \ref{prop:free} if in addition  
\[|u(0,n)|\left(\frac{2|n|}{e} \right)^{|n|}\sqrt{|n|}=o(1)\] as $n\to+\infty$ or $n\to -\infty$ then $u\equiv 0$.
\end{corollary}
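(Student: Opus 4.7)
The plan is to simply apply Proposition \ref{prop:free} and extract the constant $A$ using the precise asymptotics of the Bessel functions. By the proposition, under the stated hypotheses there is some $A \in \C$ such that
\[
u(0,n) = A\,i^{-n} J_n(1), \qquad n \in \Z.
\]
In particular, since $|i^{-n}| = 1$, we have $|u(0,n)| = |A|\,|J_n(1)|$ for every $n$.

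The asymptotic for the Bessel functions recalled just before Proposition \ref{prop:free},
\[
|J_n(1)| \sim \frac{1}{\sqrt{|n|}}\left(\frac{e}{2|n|}\right)^{|n|}\quad \text{as } |n|\to\infty,
\]
yields that the quantity
\[
|u(0,n)|\left(\frac{2|n|}{e}\right)^{|n|}\sqrt{|n|} = |A| \cdot |J_n(1)|\left(\frac{2|n|}{e}\right)^{|n|}\sqrt{|n|}
\]
tends (up to the universal multiplicative constant hidden in $\sim$) to $|A|$ as $n \to +\infty$, and likewise as $n \to -\infty$. Consequently, the assumption that this quantity is $o(1)$ in at least one of the two directions forces $A = 0$.

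With $A=0$ we obtain $u(0,\cdot) \equiv 0$, and since the free discrete Schr\"odinger evolution $e^{it\Delta_d}$ is unitary on $\ell^2(\Z)$, the solution must satisfy $u(t,\cdot) \equiv 0$ for all $t \in [0,1]$. There is no real obstacle here; the only point requiring (trivial) care is checking that the asymptotic of $|J_n(1)|$ applies as $n \to -\infty$, which follows from the symmetry $J_{-n}(x) = (-1)^n J_n(x)$ for integer $n$, so $|J_{-n}(1)| = |J_n(1)|$ and the one-sided $o(1)$ hypothesis is sufficient.
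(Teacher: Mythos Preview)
Your argument is correct and is precisely the intended one: the paper states the Corollary without proof because it is immediate from Proposition~\ref{prop:free} together with the Bessel asymptotics recalled just before it. One minor simplification: once $A=0$ you can conclude $u\equiv 0$ directly from the explicit formula $u(t,n)=Ai^{-n}e^{-2it}J_n(1-2t)$, without invoking unitarity of the evolution.
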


Assuming only slightly stronger decay,   one can apply  similar techniques  in order to obtain a uniqueness result 
for  solutions  of discrete Schr\"{o}dinger equations with compactly supported time-independent potentials. In this case 
it suffices to demand that the solution decays just in one direction.
\begin{theorem} \label{th:1} Let $u(t,n)$, $t>0$, $n\in \Z$ be a solution of \eqref{eq:schrod},   where the potential $V$   does not depend on time and also $V(n)\neq 0$ just for a finite number of $n$'s.  If,  for some  $\varepsilon>0$,   
 \begin{equation}
 \label{eq:finiteV}
|u(t,n)|\le C\left(\frac{e}{(2+\varepsilon)n}\right)^n,\quad  n>0,\ t\in\{0,1\},
\end{equation} 
then $u\equiv 0$.
\end{theorem}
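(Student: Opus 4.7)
The plan is to extend the entire-functions method of Proposition~\ref{prop:free} to the one-sided decay setting, exploiting the compact support of $V$. Choose $N$ with $V(n)=0$ for $n>N$ and consider the tail generating function $f^+(t,\theta):=\sum_{n\ge N+1}u(t,n)\theta^n$, which does not see the potential. A direct computation from \eqref{eq:schrod} gives
\[
\partial_t f^+ = i(\theta+\theta^{-1}-2) f^+ + i\theta^N\bigl(u(t,N)\theta - u(t,N+1)\bigr),
\]
and integrating against the factor $e^{-i(\theta+\theta^{-1}-2)t}$ yields the identity on $\C\setminus\{0\}$:
\[
e^{-i(\theta+\theta^{-1}-2)}f^+(1,\theta) - f^+(0,\theta) = i\theta^N\int_0^1 e^{-i(\theta+\theta^{-1}-2)s}\bigl(u(s,N)\theta-u(s,N+1)\bigr)\,ds.
\]
By \eqref{eq:coeftype} and \eqref{eq:finiteV}, $f^+(0,\cdot)$ and $f^+(1,\cdot)$ are entire of exponential type at most $\tau:=1/(2+\varepsilon)<1/2$.

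Next, I analyze growth along rays $\theta = re^{i\varphi}$ via Laplace's method on the integral. In the lower half-plane ($\sin\varphi<0$) the integrand has modulus $\asymp e^{rs\sin\varphi}$, maximized at $s=0$, so the right-hand side is polynomial of order $r^N$. Meanwhile $|e^{-i(\theta+\theta^{-1}-2)}|\asymp e^{-r|\sin\varphi|}$ decays exponentially, and when $|\sin\varphi|>\tau$ this dominates the type-$\tau$ growth of $f^+(1,\theta)$. Dominant balance in the equation then forces $f^+(0,re^{i\varphi})$ to be polynomial, so $h_{f^+(0)}(\varphi)\le 0$; by \eqref{eq:if} equality must hold on the symmetric arc. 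An analogous rearrangement of the equation (isolating $f^+(1)$) yields $h_{f^+(1)}(\varphi)=0$ on the corresponding upper-half-plane arc.

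These constraints confine the indicator diagrams of $f^+(0)$ and $f^+(1)$ to narrow wedges lying strictly inside the disks of radius $\tau<1/2$. A Phragm\'en--Lindel\"of bootstrap --- using each refined indicator bound to widen the ``good'' arcs in the next iteration, which is possible precisely because of the strict gap $\tau<1/2$ --- collapses both indicator diagrams to the origin, forcing $f^+(0)\equiv 0\equiv f^+(1)$. Hence $u(0,n)=u(1,n)=0$ for every $n\ge N+1$. Finally, since $V$ is compactly supported, a boundary propagation argument around $\{|n|\le N\}$ combined with real-analyticity of $t\mapsto u(t,n)$ (as $H=\Delta_d+V$ is bounded) extends the vanishing to all $(t,n)$, yielding $u\equiv 0$.

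The main obstacle is the bootstrap step: one must quantitatively control how each Laplace estimate shrinks the indicator diagrams until they degenerate. The strict inequality $\tau<1/2$ is essential here, since at the critical rate $\tau=1/2$ there exist non-trivial free solutions (Proposition~\ref{prop:free}).
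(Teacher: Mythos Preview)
Your bootstrap step does not close, and this is a genuine gap. Write $\omega(\theta)=-i(\theta+\theta^{-1}-2)$; then your identity reads $e^{\omega}f^+(1,\theta)-f^+(0,\theta)=i\theta^N I(\theta)$ with $I(\theta)=\int_0^1 e^{\omega s}\bigl(u(s,N)\theta-u(s,N+1)\bigr)\,ds$. A Laplace estimate shows the remainder has indicator $h_I(\varphi)=\max(0,\sin\varphi)$. Consequently, solving for $f^+(0)$ gives in every direction only $h_{f^+(0)}(\varphi)\le\max\bigl(\sin\varphi+h_{f^+(1)}(\varphi),\,\sin^+\varphi\bigr)$, and the symmetric bound for $f^+(1)$. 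Your first pass yields $h_{f^+(0)}\le 0$ on an arc in the \emph{lower} half-plane and $h_{f^+(1)}\le 0$ on an arc in the \emph{upper} half-plane; but feeding either of these back into the relation produces, in the opposite half-plane, only $h\le|\sin\varphi|$, which is \emph{worse} than the a~priori bound $\tau$ there. The remainder term thus acts as a floor: the good arcs never widen and the indicator diagrams remain lens-shaped regions of diameter $\sim\tau$, not collapsing to $\{0\}$. (Even if they did collapse, ``type zero'' alone does not give $f^+\equiv 0$; that jump is a second, smaller gap.)

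The paper circumvents the remainder entirely by replacing the monomials $\theta^n$ with Jost solutions $e^-(\theta,n)$ of $(\Delta_d+V)x=(2-\theta-\theta^{-1})x$. For compactly supported $V$ these are finite Laurent polynomials in $\theta$ on $\{0,\dots,N\}$ and equal $\theta^n$ to the left; the transition coefficients $a^+,b^+$ are rational in $\theta$. The transform $\Phi(\theta,t)=\sum_n u(t,n)e^-(\theta,n)$ then satisfies the \emph{exact} relation $\Phi(\theta,1)=e^{i(2-\theta-\theta^{-1})}\Phi(\theta,0)$ with no boundary term, because $e^-$ is a genuine eigenfunction of $H$. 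The compactness of $\mathrm{supp}\,V$ guarantees $\Phi(\cdot,t)$ still has type at most $\tau=1/(2+\varepsilon)$, and now the contradiction is immediate from \eqref{eq:if}: along $\theta=iy$ the relation forces $h_{\Phi(\cdot,1)}(\pi/2)\ge 1-\tau>\tau$, violating the type bound. The Jost-solution device is precisely what absorbs the boundary flux that blocks your iteration.
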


\begin{proof} 
We may assume that $V_n=0$ for $n>N$ and for $n<0$.
Consider the bounded operator  $H=\Delta_d+V: \ell^2\to \ell^2$. The solution $u(t,n)$ is then defined by
 $$
 u(\cdot,t)=e^{iHt}u(\cdot, 0)
 $$
 and hence belongs to $\ell^2$ for all $t>0$.
 
The absolutely continuous spectrum of $H : \ell^2 \to \ell^2$ is the segment $[0,4]$, each point with multiplicity $2$.   The continuous spectrum is parametrized naturally by the the unit circle $\T$:
 \[
 \lambda\in [0,4] \ \Rightarrow  \ \lambda= 2-\theta-\theta^{-1}, \textrm{ for some } \theta \in \T.
 \]
   For each $\theta \in \T$ set  
$\lambda(\theta)=  2-\theta-\theta^{-1}$ and denote by  $e^\pm(\theta) = e^\pm(\theta,n)$ the corresponding Jost solutions of the spectral problem 
\begin{equation}
\label{eigeneq}
Hx= \lambda(\theta)x, 
 \end{equation}
 i.e. the  solutions of \eqref{eigeneq} satisfying
 \[
 e^+(\theta, n)= \theta^n, \ \text{for} \  n>N, \ \text{and} \ e^-(\theta,n)= \theta^n \ \text{for} \ n<0.
 \] 
 We refer the reader to \cite{Tbook} and \cite{Mbook} for the precise definition and detailed discussion of Jost solutions.
 
 Except for $\theta=\pm 1$, each of the pairs $\{e^+(\theta), e^+(\theta^{-1})\}$, 
  $\{e^-(\theta), e^-(\theta^{-1})\}$ is a fundamental system of solutions of  \eqref{eigeneq}.
  Hence we have the representations
 \begin{align*}
  e^+(\theta)=a^-(\theta)e^-(\theta)+b^-(\theta)e^-(\theta^{-1}) \\
  e^-(\theta)= a^+(\theta)e^+(\theta)+b^+(\theta)e^+(\theta^{-1}) 
  \end{align*}
 It is known, see e.g. \cite{Tbook}, that $a^\pm$ and $b^\pm$ are rational functions of $\theta$, with no poles on $\T$,
 and  also for $0\leq n\leq N$ the functions $e^\pm(\theta,n)$ are linear combinations of $\theta^j$, 
  $j\in \{-N, -N+1, \ldots \ , 2N\}$.  In particular,
\begin{equation}
\label{eq:jostbasic1}
\lim_{|\theta|\to+\infty}\frac{\log|a^+(\theta)|}{|\theta|}
=\lim_{|\theta|\to+\infty}\frac{\log|b^+(\theta)|}{|\theta|}=0
\end{equation}
and
\begin{equation}
\label{eq:jostbasic2}
\lim_{|\theta|\to+\infty}\frac{\log|e^\pm(\theta,n)|}{|\theta|}=0\qquad\mbox{for }n=0,\ldots,N.
\end{equation}

Consider the function
  \begin{multline*}
  \Phi(\theta, t)= \sum_{-\infty}^\infty u(t,n)e^-(\theta,n)=\sum_{-\infty}^{-1} u(t,n)e^{-}(\theta,n)\\+a^+(\theta)\sum_{0}^\infty u(t,n)e^{+}(\theta,n)+b^+(\theta)\sum_{0}^\infty u(t,n)e^+(\theta^{-1},n).
  \end{multline*}
 
 For all $t\geq 0$ these functions are in $L^2(\T)$. In addition   the first and the third series in the right-hand side converge 
for $|\theta| > 1$ while the second one converges for $|\theta| < 1$.  For $t=0$ and $t=1$ the 
second term also converges for $|\theta|>1$, by the hypothesis \eqref{eq:finiteV}, thus the functions 
$\Phi(\theta, 0)$ and $\Phi(\theta, 1)$ are holomorphic in $\C\setminus \{0\}$ except perhaps at the  
  poles of the functions $a^+$ and $b^+$.  Actually,  by the  basic energy estimate in the next section one can extend this convergence property to $\Phi(\theta,t)$ for all $t\in[0,1]$, see Corollary \ref{rmk:onesided}.

We have
  \begin{multline*}
 -i\frac{\partial \Phi(\theta,t)}{\partial t} =
 \sum_{n=-\infty}^\infty  (Hu)(t,n) e^-(n,\theta)=\sum_{n=-\infty}^\infty u(t,n)(He^{-})(n,\theta)\\ 
=(2-\theta-\theta^{-1}) \Phi(\theta,t).
   \end{multline*}
	Hence
$   \Phi(\theta,t) =e^{i t(2-\theta-\theta^{-1})}  \Phi(\theta,0 )$,
  and  in particular 
   \beq
   \label{relation}
   \Phi(\theta,1) =e^{i (2-\theta-\theta^{-1})}  \Phi(\theta,0),
   \eeq   
 a relation which extends to the whole complex plane outside of $\theta = 0$.

To derive a contradiction to $\Phi \neq 0$, we write $\Phi(\theta, t)$ as
  \begin{multline*}
   \Phi(\theta,t)=\left (\sum_{n<0} u(t,n)\theta^n + \sum_{n=0}^Nu(t,n)e^-(n,\theta)  + b^+(\theta) \sum_{n > N}u(t,n) \theta^{-n}
   \right ) \\
       +a^+(\theta)  \left ( \sum_{n > N}u(t,n) \theta^{n} \right  )  =\mathrel{\mathop:} A(\theta, t)+a^+(\theta) B(\theta,t).
    \end{multline*}
Since $u(t, \, \cdot \,) \in \ell^2$, we clearly have
$$
\lim_{\theta\to \infty} \frac {\log|A(\theta,t)|}{|\theta|}= \lim_{\theta\to \infty} \frac {\log| a^+(\theta,t)|}{|\theta|}=0,
$$
while \eqref{eq:finiteV} yields that $ B(\theta,t)$, $t=0$ and $t=1$, are entire functions of exponential type at most  $(2+\veps)^{-1}$.
Hence, for each $\alpha \in [0,2\pi]$ we have
$$
 \limsup_{r\to\infty} \frac{\log |B(re^{i\alpha},t)|}{r}  \leq  \frac 1 {2+\veps}, \ t\in\{0,1\},
$$
and therefore
\beq
\label{lims}
 \limsup_{r\to\infty} \frac{\log |\Phi(re^{i\alpha},t)|}{ r}  \leq  \frac 1 {2+\veps}, \ t\in\{0,1\}.
\eeq
By \eqref{eq:if}, we also have
\[
\limsup_{r\to\infty} \frac{\log |\Phi(re^{i\alpha},t)|}{ r}  \geq  -\frac 1 {2+\veps}, \ t\in\{0,1\}.
\]
This leads  to a contradiction unless $\Phi\equiv 0$, since according \eqref{relation} we have 
$$
\limsup_{y\to +\infty} \frac{\log |\Phi(iy,1)|}{y} = 1+ \limsup_{y\to +\infty} \frac{\log |\Phi(iy,0)|}{y} 
> 1- \frac 1 {2+\veps}> \frac 1 {2+\veps}.
$$
\end{proof}

It would be of interest to extend this result to the case of potentials with fast decay, not necessarily compactly supported.

\section{First energy estimate} \label{sec:energy}
In the remainder of the paper we will follow the ideas of \cite{EKPV}, to prove that a solution of the discrete  Schr\"{o}dinger equation which decays sufficiently fast along both half-axes at two different  moments of time is trivial.

We begin with an energy estimate for solutions of a non-homogeneous initial value problem and show that if the initial data is well-concentrated, the energy cannot spread out too fast.

Given $\alpha >0$ and $t\geq 0$ denote
\[
\psi_\alpha(t)= \{\psi_\alpha(t, n)\}_{n\in\Z}=\{(1+|n|)^{\alpha|n|/ (1+t)} \}_{n\in\Z}
\]

\begin{proposition}\label{pr:ee} 
Let $V=V_1+iV_2$, with $V_1,V_2\,:[0,T]\times\Z\rightarrow\R$  and $V_2$ bounded
and $F\,:[0,T]\times\Z\rightarrow\C$ bounded.
Let $u\,:[0,T]\times\Z\rightarrow\C$, $u\in \mathcal{C}^1\bigl([0,T],\ell^2(\Z)\bigr)$, satisfy
\begin{equation}\label{eq:VF}
\partial_t u(t,n)=i(\Delta u(t,n)+V(t,n)u+F(t,n)).
\end{equation}
Assume that $\{\psi_\alpha(0, n)u(0,n)\}\in \ell^2(\Z)$ for some $\alpha \in (0,1]$. Then, for $T>0$, 
\begin{multline}\label{eq:energy}
\|\psi_\alpha (T,n) u(T,n)\|^2_2 \leq\\ e^{CT}  \left (
   \|\psi_\alpha (0, n) u(0, n)\|^2_{2} +
              \int_0^T \|\psi_\alpha(s, n) F(s, n)\|^2_2\,\mbox{d}s  \right ).
\end{multline}              
\end{proposition}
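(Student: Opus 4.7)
My plan is to set $f(t) := \|\psi_\alpha(t,\cdot) u(t,\cdot)\|_2^2$ and derive a differential inequality of Gronwall type,
\[
f'(t) \leq C f(t) + \|\psi_\alpha(t,\cdot) F(t,\cdot)\|_2^2,
\]
with $C = C(\alpha,\|V_2\|_\infty)$, from which \eqref{eq:energy} follows by integration. Differentiating under the sum produces four contributions. First, the time-derivative of the weight gives the \emph{negative} term
\[
\sum_n \partial_t\psi_\alpha^2(t,n)\,|u(t,n)|^2 = -\frac{2\alpha}{(1+t)^2}\sum_n |n|\log(1+|n|)\psi_\alpha^2|u|^2.
\]
Second, using $\partial_t|u|^2 = -2\Im(\bar u\Delta u)-2V_2|u|^2-2\Im(\bar u F)$, a discrete summation by parts recasts the Laplacian piece as
\[
-2\sum_n \psi_\alpha^2 \Im(\bar u \Delta u) = 2\sum_n \bigl[\psi_\alpha^2(t,n+1)-\psi_\alpha^2(t,n)\bigr]\Im(\bar u(n) u(n+1)).
\]
Third, the $V_2$ term $-2\sum_n V_2\psi_\alpha^2|u|^2$ is at most $2\|V_2\|_\infty f(t)$ in absolute value, and fourth, the source term $-2\sum_n \psi_\alpha^2\Im(\bar u F)$ is bounded by $f(t)+\|\psi_\alpha F\|_2^2$ via Cauchy--Schwarz.

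The main obstacle is the Laplacian commutator. Because $\psi_\alpha$ grows super-exponentially, the ratio $s_n := \psi_\alpha^2(t,n+1)/\psi_\alpha^2(t,n)$ is itself \emph{unbounded}: a direct computation gives $s_n \sim e^{2\alpha/(1+t)}(|n|+2)^{2\alpha/(1+t)}$ as $|n|\to\infty$. Hence a naive estimate by $\|\psi_\alpha u\|_2^2$ leaves a coefficient growing polynomially in $n$ that $f(t)$ alone cannot control. My remedy is to apply the weighted AM--GM inequality
\[
|u(n)||u(n+1)| \leq \tfrac{1}{2}\bigl(\epsilon_n|u(n)|^2 + \epsilon_n^{-1}|u(n+1)|^2\bigr)
\]
with $\epsilon_n := \alpha|n|\log(1+|n|)\big/\bigl[(1+t)^2|s_n-1|\bigr]$. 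This choice is calibrated so that the $|u(n)|^2$ contribution to the commutator matches the coefficient of the first (negative) term exactly and is cancelled by it. After shifting index $m=n+1$ in the $|u(n+1)|^2$ contribution, its coefficient behaves asymptotically like $|m|^{2\alpha/(1+t)-1}(\log m)^{-1}\psi_\alpha^2(m)$, and this is where the hypothesis $\alpha\leq 1$ is decisive: it forces $2\alpha/(1+t)-1 \leq 1$, so the remaining piece is again dominated by the first (negative) contribution plus a bounded multiple of $f(t)$.

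Assembling these four estimates yields the desired differential inequality, and Gronwall's lemma produces the bound \eqref{eq:energy}. The formal manipulations above are justified by first truncating the sum to $|n|\leq N$, proving the estimate for the truncated quantity (where all sums are finite), and then passing to the limit $N\to\infty$; the finite right-hand side at $t=0$, together with the uniform-in-$N$ Gronwall bound, propagates $\psi_\alpha(0,\cdot)u(0,\cdot)\in\ell^2$ to $\psi_\alpha(t,\cdot)u(t,\cdot)\in\ell^2$ on any compact subinterval of $[0,T]$. The essential difficulty throughout is the tension between the polynomial growth of $s_n$ in $n$ and the linear-logarithmic weight provided by $\partial_t \psi_\alpha^2$; the restriction $\alpha\leq 1$ is precisely what makes the absorption possible.
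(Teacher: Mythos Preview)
Your approach is essentially correct and reaches the same Gronwall inequality as the paper, but by a more laborious route. The paper substitutes $f=\psi_\alpha u$ at the outset and splits $\partial_t f=\mathcal{S}f+\mathcal{A}f+iVf+i\psi F$ into symmetric and antisymmetric parts. The cross term that emerges is $2\Im\sum_n a_n f_{n+1}\bar f_n$ with $a_n=\psi_{n+1}/\psi_n-\psi_n/\psi_{n+1}$, which has size $\sim |n|^{\alpha/(1+t)}$. An \emph{unweighted} Cauchy--Schwarz then gives $\partial_t H\le\sum_n(2\partial_t\kappa_n+|a_n|+|a_{n-1}|)|f_n|^2+\cdots$, and the pointwise bound $2\partial_t\kappa_n+|a_n|+|a_{n-1}|\le 2C$ is immediate from $|n|^{\alpha/(1+t)}=o(|n|\log|n|)$ when $\alpha\le 1$. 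By contrast, you keep $u$ as the unknown, so the commutator carries $\psi_n^2(s_n-1)$ with $s_n-1\sim |n|^{2\alpha/(1+t)}$, twice the exponent; this is why you are forced into the calibrated weighted AM--GM with $\epsilon_n$. The substitution $f=\psi u$ is exactly what symmetrises the weight between $n$ and $n+1$ and removes the need for that calibration.

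One small correction: with your choice of $\epsilon_n$ the $|u(n)|^2$ contribution equals $\frac{\alpha}{(1+t)^2}|n|\log(1+|n|)\psi_\alpha^2|u(n)|^2$, which is \emph{half} of the negative derivative term, not all of it. This is harmless (the surviving half still dominates the shifted $|u(m)|^2$ piece of order $|m|^{2\alpha/(1+t)-1}/\log|m|$, just as you say), but the claim ``matches \ldots\ exactly and is cancelled by it'' overstates. You should also handle $n=0$ separately, since $\epsilon_0=0$ there; this is a single bounded term and causes no trouble. The truncation-and-limit justification you sketch is the same as the paper's.
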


\begin{proof}
Consider $f(t,n)=\psi_\alpha (t,n)u(t,n)$ and let $H(t)=\|f(t,n)\|_2^2$. We fix $\alpha$ 
till the end of the proof and write $\psi=\psi_\alpha$. 

We will perform several formal computations, assuming that $H(t)$ is finite for all $t\in[0,T]$, and then justify these computations at the end of the proof. 

Define 
$$
\kappa(n,t)=\log \psi(t,n)= \frac{\alpha}{1+t}|n|\log(1+|n|).
$$
Then
\[
\partial_t f=i\Delta(\psi^{-1} f)+iVf+\partial_t\kappa f+i\psi F,\]
which we rewrite as $\partial_t f=\S f+\A f+iVf+i \psi F $, where $\S$ and $\A$ are symmetric and anti-symmetric operators, respectively. Explicitly
\begin{align*}
\S f&=\frac i2\left(\psi\Delta(\psi^{-1}f)-\psi^{-1}\Delta(\psi f)\right)+\partial_t\kappa f,\\
\A f&=\frac i2\left(\psi\Delta(\psi^{-1}f)+\psi^{-1}\Delta(\psi f)\right).
\end{align*}

Denote
\beq
\label{eq:coeff}
a_n=\frac{\psi_{n+1}}{\psi_n}-\frac{\psi_n}{\psi_{n+1}},\quad b_n=\frac{\psi_{n+1}}{\psi_n}+\frac{\psi_n}{\psi_{n+1}}.
\eeq
 We then rewrite
\begin{align}
\label{eq:sn}
(\S f)_n=&-\frac{i}{2}(a_n f_{n+1}-a_{n-1}f_{n-1})+(\partial_t\kappa)_nf_n,     \\
\label{eq:an}
(\A f)_n=&
\frac{i}{2}(b_n f_{n+1}+b_{n-1}f_{n-1})-2if_n,
\end{align}
In  what follows we will use the notation  $a_n=a(t,n)$, and similarly for $\psi_n$, et cetera.

We want to control the growth of $H(t)$. Clearly, $\partial_t H(t)=2\Re\langle \partial_t f,f\rangle$ and  thus
\begin{align*}
\partial_t H(t)=&2\langle \S f,f\rangle-2\Im\langle Vf,f\rangle-2\Im\langle \psi F,f\rangle\\
=&2\Im\sum_n a_nf_{n+1}\overline{f}_n
+2\langle \partial_t\kappa f,f\rangle-2\langle V_2f,f\rangle-2\Im\langle \psi F,f\rangle.
\end{align*} 
This implies
$$
\partial_t H(t)\le 2\|\psi F\|_2\|f\|_2+\|V_2\|_\infty\|f\|_2+\sum_n(2\partial_t\kappa_n+|a_n|+|a_{n-1}|)|f_n|^2. 
$$

Our aim is to prove that for all $n\in\Z$
\begin{equation}\label{eq:psipos}
2\partial_t\kappa_n+|a_n|+|a_{n-1}|\le 2C,
\end{equation}
where $C$ is a constant. 
We have 
\[
\partial_t \kappa_n=-\frac{\alpha}{(1+t)^2}|n|\log (|n|+1). 
 \]
Further, $|a_n|\le e^\alpha(|n|+1)^{\alpha}.$ Hence, if
$\alpha\le 1$ we obtain (\ref{eq:psipos}), for $t\in[0,1]$.

Therefore $\partial_t \|f\|_2\le C\|f\|_2+\|\psi F\|_2$ and \eqref{eq:energy}  follows.

In order to justify these computations we truncate 
the weight function $\psi$ to an interval $[-N, N]$:
\[
\psi_N(n,t)=\begin{cases}(|n|+1)^{(1+t)^{-1}\alpha|n|},\ |n|\le N\\
(|N|+1)^{(1+t)^{-1}\alpha|N|},\ |n|>N.\end{cases}
\]
Since the solution $u$ is in $\ell^2$, the relevant norms weighted by $\psi_N$ are guaranteed to be finite and by running the above argument we obtain (\ref{eq:psipos}) and (\ref{eq:energy}) for the weight $\psi_N$, this time rigorously. The desired inequality follows by passing to the limit as $N\to\infty$.
\end{proof}

\begin{corollary}\label{rmk:onesided}
Let $u:[0,1]\times\Z\rightarrow\C$ be a strong solution of the Schr\"odinger equation
\[
\partial_t u(t,n)=i(\Delta u(t,n)+V(t,n)u),
\]
where $V=V_1+iV_2$ is as above. Further suppose that
 \[
 \sum_{n>0}n^{2\alpha n}|u(0,n)|^2 < \infty
 \] 
 for some $\alpha\le 1.$ Then for each $t\in[0,1]$ we have
$$
\sum_{n>0} n^{\alpha n }|u(t,n)|^2 < \infty.
$$
\end{corollary}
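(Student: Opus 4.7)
The plan is to adapt Proposition \ref{pr:ee} to a \emph{one-sided} weight, since the hypothesis provides growth control only on the positive half-axis. Define
\[
\tilde\psi_\alpha(t,n) := (1+n_+)^{\alpha n_+/(1+t)},\qquad n_+ := \max(n,0),
\]
so that $\tilde\psi_\alpha(t,n) = 1$ for $n \le 0$ and $\tilde\psi_\alpha(t,n) = \psi_\alpha(t,n)$ for $n > 0$. Under the standing hypothesis of the corollary, we have
\[
\|\tilde\psi_\alpha(0,\cdot) u(0,\cdot)\|_2^2 = \sum_{n\le 0}|u(0,n)|^2 + \sum_{n>0}(1+n)^{2\alpha n}|u(0,n)|^2 < \infty,
\]
since $u(0,\cdot) \in \ell^2$ (as $u$ is a strong solution) and the one-sided weighted sum is finite by assumption.

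I would then rerun the proof of Proposition \ref{pr:ee} verbatim, replacing $\psi_\alpha$ by $\tilde\psi_\alpha$ and setting $F \equiv 0$. The entire argument reduces to verifying the pointwise bound
\[
2\,\partial_t\tilde\kappa_n + |\tilde a_n| + |\tilde a_{n-1}| \le 2C,\qquad n\in\Z,\ t\in[0,1],
\]
where $\tilde\kappa_n = \log\tilde\psi_\alpha(t,n)$ and $\tilde a_n = \tilde\psi_{n+1}/\tilde\psi_n - \tilde\psi_n/\tilde\psi_{n+1}$. For $n \le -1$ both sides vanish since $\tilde\psi_n$ is constant in $n$ on that range, and at $n=0$ the coefficient $\tilde a_0 = 2^{\alpha/(1+t)} - 2^{-\alpha/(1+t)}$ is $O(1)$. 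For $n \ge 1$ the estimate
\[
\frac{\tilde\psi_{n+1}}{\tilde\psi_n} = (n+2)^{\alpha/(1+t)}\left(1+\tfrac{1}{n+1}\right)^{\alpha n/(1+t)} \le e\,(n+2)^\alpha
\]
(valid for $\alpha\le 1$ and $t\in[0,1]$) gives $|\tilde a_n| \le e(n+2)^\alpha$, while
\[
\partial_t\tilde\kappa_n = -\frac{\alpha n \log(1+n)}{(1+t)^2} \le 0
\]
dominates the positive contribution for $n$ large, exactly as in the original proof.

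Consequently, the same truncation-and-limit argument used to justify Proposition \ref{pr:ee} yields
\[
\|\tilde\psi_\alpha(T,\cdot)u(T,\cdot)\|_2 \le e^{CT}\,\|\tilde\psi_\alpha(0,\cdot)u(0,\cdot)\|_2 < \infty,\qquad T\in[0,1].
\]
Restricting the left-hand side to $n > 0$ and noting that $n^{\alpha n} \le (1+n)^{2\alpha n/(1+T)}$ for all $T \in [0,1]$ gives the claimed estimate; in fact the argument yields the stronger conclusion $\sum_{n>0} n^{2\alpha n/(1+T)} |u(T,n)|^2 < \infty$. The only real point to be careful about is the verification of the key pointwise estimate at the transition $n=0$ between the flat and growing regimes of $\tilde\psi_\alpha$, but this is immediate by direct computation; everything else is a straightforward transcription of the proof of Proposition \ref{pr:ee}.
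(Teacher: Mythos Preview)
Your argument is correct, but it is organized differently from the paper's. The paper does not modify the weight; instead it truncates the \emph{solution}, setting $\tilde u(t,n)=0$ for $n<0$ and $\tilde u(t,n)=u(t,n)$ for $n\ge 0$. This $\tilde u$ then satisfies the inhomogeneous equation \eqref{eq:VF} with a forcing term $F$ that is bounded and supported on $\{-1,0\}$ (the boundary defect from cutting the discrete Laplacian), so Proposition~\ref{pr:ee} applies directly with the original two-sided weight $\psi_\alpha$; since $\tilde u$ vanishes for $n<0$, the resulting estimate is automatically one-sided. Your route---keeping $u$ intact but flattening $\psi_\alpha$ on $n\le 0$ and re-checking the pointwise inequality \eqref{eq:psipos} across the transition at $n=0$---achieves the same end. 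The paper's version is a black-box application of the proposition and needs no new verification, while yours avoids the artificial inhomogeneity $F$ at the cost of reopening the proof; either way the substance is the same one-line observation.
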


\begin{proof}
Define $\tilde{u}(t,n)=0$ for $n<0$ and $\tilde{u}(t,n)=u(t,n)$ for $n\ge 0$. Then $\tilde{u}$ satisfies  \eqref{eq:VF} with $F(t,n)$  bounded and vanishing for $n\not\in\{-1,0\}$. If we apply Proposition \ref{pr:ee} to $\tilde{u}$ we obtain the required estimate
\end{proof}

\section{Logarithmic convexity of weighted $\ell^2$-norms}
\subsection{Preliminary discussion}
From now on we fix $\gamma_0 > 0$ and suppose that $V\,:[0,T]\times\Z\rightarrow\R$ is bounded. Further, we assume that $u$ is a strong solution of
$$
\partial_t u=i(\Delta_d u+Vu)
$$
such that $\|(1+|n|)^{\gamma_0(1+|n|)}u(0,n)\|_2$ and $\|(1+|n|)^{\gamma_0(1+|n|)}u(1,n)\|_2$ are finite.

Following the ideas of \cite{EKPV}, we are looking for a weight  
\beq\label{eq:weight}\psi(t,n)=\exp(\kappa(t,n))\eeq 
to give us a logarithmically convex function $e^{-Ct(1-t)}H(t)$, where 
\[H(t)=\|\psi(t,n)u(t,n)\|_2^2\] and $C$ depends on $V$ and $\psi$. 

We will first use such a convexity argument to show that for any $0<\gamma<\gamma_0$ and any $t\in[0,1]$,
\begin{equation}
\label{eq:normfin}
\|(1+|n|)^{\gamma(1+|n|)}u(t,n)\|_2 < \infty.
\end{equation}
 This also implies that 
\begin{equation}
\label{eq:normfin2}
\|(C_0+|n|+R_0t(1-t))^{\gamma\bigr(C_0+|n|+R_0t(1-t)\bigl)}u(t,n)\|_2<+\infty
\end{equation}
for any $C_0, R_0>0$ and $t\in[0,1]$, and we then set out to prove the logarithmic convexity in $t$ of this latter norm.

In both steps we consider weights of the form \eqref{eq:weight}, with
\[\kappa(t,n)=\gamma(|n|+R(t))\ln^b\bigl(|n|+R(t)\bigr)\]
 where either $1/2 < b < 1$ and $R(t) = 1$, or $b = 1$ and $R(t)=C_0+R_0t(1-t)$. As before we set $f(t,n)=\psi(t,n)u(t,n)$. 

We will first assume that $b<1$, prove estimates independent of $b$, and let $b\to1$ to establish \eqref{eq:normfin}.
This will allow us to justify the computations involved in the second step, when $b=1$ and we prove the convexity of \eqref{eq:normfin2}.
\subsection{Formal computations}
We collect here a number of formal identities which we need in the sequel. 
The first identities are the same as in the continuous case, found in for example \cite{EKPV2},
others are specific to the discrete case. 
We use the notation established in the proof of Proposition \ref{pr:ee}. 

 We already know that
$\partial_t H(t)=2\langle \S f,f\rangle$, since $V$ is real-valued, and thus
\begin{align*}
\partial^2_t H(t)&=2\langle \S_tf,f\rangle+4\Re\langle \S f, f_t\rangle\\&=2\langle \S_tf,f\rangle+4\|\S f\|^2+2\langle [\S,\A]f,f\rangle+4\Re\langle \S f, iVf\rangle\\
&=2\langle \S_tf,f\rangle+2\langle [\S,\A]f,f\rangle+4\Re\langle \S f+iVf,\S f\rangle\\&=
2\langle \S_tf,f\rangle+2\langle [\S,\A]f,f\rangle+
\|2\S f+iVf\|^2-\|Vf\|^2.
\end{align*} 
Therefore we obtain that
\begin{align*}
\|f\|^{2}\partial^2_t(\log H(t)) &=\frac{\|2\S f+iVf\|^2\|f\|^2-4|\langle \S f,f\rangle|^2}{\|f\|^2} \\ &\qquad +2(\langle \S_t f,f\rangle+\langle[\S,\A]f,f\rangle)-\|Vf\|^2\\
&=\frac{\|2\S f+iVf\|^2\|f\|^2-|\Re\langle 2\S f+iVf,f\rangle|^2}{\|f\|^2}\\&\qquad+2(\langle \S_t f,f\rangle+\langle[\S,\A]f,f\rangle)-\|Vf\|^2\\
&\ge 2(\langle \S_t f,f\rangle+\langle[\S,\A]f,f\rangle)-\|Vf\|^2.
\end{align*}
We reiterate that our aim is to show that 
\beq
\label{eq:main1}
\partial_t^2\log H(t)\geq -2C
\eeq
 for some $C\geq 0$, which implies the $\log$-convexity
of $e^{-Ct(1-t)}H(t)$. The last term in the right-hand side above is clearly bounded 
below by $-C\|f\|^2$ since $V$ is bounded. It suffices to establish an
estimate of the first two terms of the form
\beq
\label{eq:main}
\langle \S_t f,f\rangle+\langle[\S,\A]f,f\rangle > - C\|f\|^2.
\eeq

We refer now to \eqref{eq:an}, \eqref{eq:sn}. It follows that 
\[
(\S_tf)_n =-i/2(a'_n f_{n+1}-a'_{n-1}f_{n-1})+\kappa''_nf_n,
\]
 and  finally
$$(2\S_tf+2[\S,\A]f)_n=\nu_{n+1}f_{n+2}+\lambda_nf_{n+1}+\mu_nf_n+\overline{\lambda_{n-1}}f_{n-1}+\nu_{n-1}f_{n-2},$$
where 
\begin{align*}
\nu_{n+1} &=\frac12(a_nb_{n+1}-a_{n+1}b_n), \\
\lambda_n &=-ib_n(\kappa_{n+1}'-\kappa_n')-ia_n',\\
\mu_n &=a_nb_n-a_{n-1}b_{n-1}+2\kappa_n'',
\end{align*}
and, as before, the coefficients $a_n$,$b_n$ are defined in  \eqref{eq:coeff}.
 
Clearly $\psi_n'=\kappa_n'\psi_n$, implying that
$a_n'=(\kappa_{n+1}'-\kappa_n')b_n$ and 
\[\lambda_n=-2ib_n(\kappa_{n+1}'-\kappa_n').\]

\subsection{Estimates with an auxiliary weight}

\begin{proposition}
Let $\gamma > 0$. Assume that $u$ is a strong solution of
$$
\partial_t u=i(\Delta_d u+Vu)
$$
where the potential $V$ is a bounded real-valued function. Let also  
\begin{equation}
\label{eq:prop41}
\norm{(1+|n|)^{\gamma(1+|n|)}u(t,n)}_2<+\infty,\quad t\in\{0;1\}.
\end{equation}
 Then, for all $t\in[0,1]$,
$\norm{(1+|n|)^{\gamma(1+|n|)}u(t,n)}_2<+\infty$.
\end{proposition}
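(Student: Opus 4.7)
The plan is to apply the general convexity framework of Subsection 4.2 with the time-independent auxiliary weight $\psi(n) = \exp(\kappa(n))$, where $\kappa(n) = \gamma_0(|n|+1)\log^b(|n|+1)$ and $1/2 < b < 1$, as advertised in Subsection 4.1, and then to let $b \to 1^-$. Because $\kappa$ is independent of $t$, both $\S_t \equiv 0$ and $\lambda_n \equiv 0$ in the formal identities of Subsection 4.2, so the commutator operator $2[\S,\A]f$ reduces to the three-term expression $\nu_{n+1}f_{n+2} + \mu_n f_n + \nu_{n-1}f_{n-2}$ with $\mu_n = a_n b_n - a_{n-1} b_{n-1}$ and $\nu_{n+1} = \tfrac{1}{2}(a_n b_{n+1} - a_{n+1} b_n)$.

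A discrete mean value computation, using $a_n b_n = 2\sinh(2(\kappa_{n+1}-\kappa_n))$ and the fact that the second differences of $\kappa$ are of order $\log^{b-1}(|n|)/|n|$, shows that $\mu_n, \nu_n \to 0$ as $|n|\to\infty$ for each $b < 1$, since the sub-polynomial factor $e^{2\gamma_0\log^b|n|}$ is dominated by $|n|$. Hence $C_b := \sup_n(|\mu_n| + |\nu_n|) < \infty$, which gives the form bound $\langle [\S,\A]f, f\rangle \geq -C_b \|f\|_2^2$. Combined with the Cauchy--Schwarz reduction of Subsection 4.2 and the boundedness of $V$, this yields log-convexity of $e^{-C_b' t(1-t)} H_b(t)$, where $H_b(t) = \|\psi u(t,\cdot)\|_2^2$. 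To make these computations rigorous, I would truncate to $\kappa_N = \min(\kappa, N)$, so that $\psi_N = \exp(\kappa_N)$ is bounded and $H_{b,N}(t)$ is automatically finite by virtue of $u(t, \cdot) \in \ell^2$; running the argument with $N$-independent constants and passing $N \to \infty$ by monotone convergence then gives
\[
H_b(t) \leq e^{C_b' t(1-t)} H_b(0)^{1-t} H_b(1)^t, \qquad t \in [0,1].
\]

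Since $\psi(n) \leq (1+|n|)^{\gamma_0(1+|n|)}$, the hypothesis gives $H_b(0), H_b(1) < \infty$, and hence $H_b(t) < \infty$ for every $b \in (1/2, 1)$. To deduce the conclusion for the weight $(1+|n|)^{\gamma(1+|n|)}$ with $0 < \gamma < \gamma_0$, for each truncation level $M$ I would pick $b = b(M)$ close enough to $1$ that $\gamma \log L \leq \gamma_0 \log^{b(M)} L$ for all $L = |n|+1 \leq M+1$; then
\[
\sum_{|n| \leq M} (1+|n|)^{2\gamma(1+|n|)} |u(t,n)|^2 \leq H_{b(M)}(t),
\]
and the statement follows by sending $M \to \infty$, provided $H_b(t)$ remains bounded as $b \to 1^-$.

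The hard part I anticipate is precisely this uniformity in $b$: the pointwise estimate on $\mu_n$ degenerates at $b = 1$ (where $a_n b_n \sim |n|^{2\gamma_0}$), so some additional cancellation must be exploited. A natural candidate is the nonnegative variance term $4\|\S f\|_2^2 \|f\|_2^2 - 4(\langle \S f, f\rangle)^2 \geq 0$ that was discarded in the straightforward application of Cauchy--Schwarz in Subsection 4.2; retaining this term should absorb the bad part of the commutator lower bound and yield convexity constants controlled uniformly as $b \to 1^-$.
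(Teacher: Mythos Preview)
Your setup is right, and so is the identification of the only nontrivial issue: uniformity of the convexity constant as $b\to 1^-$. But the resolution is much simpler than you suggest, and your speculative fix (the variance term) is not the one that works. The point you are missing is a \emph{sign}: the diagonal coefficient $\mu_n$ is nonnegative, so the term $\sum_n \mu_n|f_n|^2$ contributes nonnegatively to $\langle 2[\S,\A]f,f\rangle$ and can be dropped altogether. Indeed, $a_nb_n=2\sinh\bigl(2(\kappa_{n+1}-\kappa_n)\bigr)$, so $\mu_n=2\sinh(2d_n)-2\sinh(2d_{n-1})$ with $d_n=\kappa_{n+1}-\kappa_n$; convexity of $x\mapsto(1+x)\ln^b(1+x)$ makes $d_n$ nondecreasing in $n$, hence $\mu_n\ge 0$. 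No upper bound on $\mu_n$ is needed, and the fact that $|\mu_n|$ blows up at $b=1$ is irrelevant.

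Only the off-diagonal piece $2\Re\sum_n\nu_{n+1}f_{n+2}\bar f_n$ needs a lower bound, and here $|\nu_{n+1}|=2\bigl|\sinh(\kappa_{n+2}+\kappa_n-2\kappa_{n+1})\bigr|$, where the second difference of $\kappa_b$ is bounded above uniformly in $n$ \emph{and} $b\in(1/2,1)$. This gives $\langle 2[\S,\A]f,f\rangle\ge -C\|f\|_2^2$ with $C$ independent of $b$, exactly the uniformity you were looking for. Monotone convergence as $b\to1^-$ then yields the conclusion for the \emph{same} $\gamma$ as in the hypothesis (your $\gamma<\gamma_0$ detour is unnecessary and in any case would not close without the uniform $C$). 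One further simplification: rather than truncating $\kappa$ (which introduces awkward boundary terms), the paper uses the energy estimate of Section~\ref{sec:energy} to know a priori that $H_b(t)<\infty$ for every $b<1$ and all $t\in[0,1]$, which legitimizes the formal computations directly.
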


\begin{proof}
Consider the weight function
\[
\psi(n)= e^{\kappa_b(n)}, \  \kappa_b(n)=\gamma(1+|n|)\ln^b\bigl(1+|n|\bigr),
\]
where $1/2<b<1$.  
Note that the hypotheses \eqref{eq:prop41} combined with Proposition \ref{pr:ee}
show that $H_b(t)=\|\exp(\kappa_b(n))u(t,n)\|_2^2$ is finite for all $t$, allowing us to justify 
the computations of the preceding section for this choice of weight. We will show that $H(t)=H_b(t)$ satisfies
\eqref{eq:main1}
 with some  $C$ independent of    $b$, whence
\begin{multline*}
\|\exp(\kappa_b(n))u(t,n)\|_2^2\le e^{\frac{C}{2}t(1-t)}H_b(0)^{1-t}H_b(1)^t\\
\le e^{\frac{C}{2}t(1-t)}\norm{(1+|n|)^{\gamma(1+|n|)}u(0,n)}_2^{2(1-t)}\norm{(1+|n|)^{\gamma(1+|n|)}u(1,n)}_2^{2t}.
\end{multline*}
Letting $b\to 1$ and applying the monotone convergence theorem then concludes the proof.

We refer to  computations in  the previous section.
 In the current setting $S_t=0$ and $\lambda_n=0$ so relation \eqref{eq:main}   reduces  to
 \beq
 \label{eq:wb}
\scal{2[\S,\A]f,f}\ge -C\|f\|^2.
\eeq

We have 
\[
\scal{2[\S,\A]f,f}=\sum_n\mu_n|f_n|^2+2\Re\sum_n \nu_{n+1}f_{n+2}\overline{f_n},\]
 where
\[
\mu_n=a_nb_n-a_{n-1}b_{n-1}=\frac{\psi_{n+1}^2}{\psi_n^2}-\frac{\psi_{n}^2}{\psi_{n-1}^2}-\frac{\psi_{n}^2}{\psi_{n+1}^2}+\frac{\psi_{n-1}^2}{\psi_n^2},\]
and
\[
\nu_{n+1}=\frac12(a_nb_{n+1}-a_{n+1}b_n)=-\frac{\psi_n\psi_{n+2}}{\psi_{n+1}^2}+\frac{\psi_{n+1}^2}{\psi_{n+2}\psi_n},
\]
where the coefficients $a_n$ and $b_n$ are defined in \eqref{eq:coeff}.
By appealing to the second derivative of $x \mapsto (1+x)\ln^b(1+x)$ it is easy to verify that $\kappa_b(n+2)+\kappa_b(n)-2\kappa_b(n+1)$ is always non-negative and uniformly bounded from above.  Thus $\nu_{n+1}$ is uniformly bounded and $\mu_n\ge 0$. This implies \eqref{eq:wb}.
\end{proof}
\subsection{Convexity estimate}
In this subsection we consider the weight function given by
\[
\psi(t,n)=e^{\kappa(t,n)}, \ \mbox{where} \ \kappa(t,n)=\gamma(|n|+R(t))\ln(|n|+R(t)),
\] 
and  $R(t)=C_0+R_0t(1-t)$, $R_0 > 0$,  $C_0$  being large enough. 
As before we define $H(t)=\|u(t,n)\psi(t,n)\|_2^2$.

\begin{lemma} \label{lem:convex}
For every $\gamma>(3+\sqrt{3})/2$  there exists $C(\gamma)$  such that for $C_0>C(\gamma)$ and $R(t)=C_0+R_0t(1-t)$ we have
\[
\partial_t^2(\log H(t))\ge -\frac{4\gamma}{2\gamma-3} R_0\log R_0- C_1 R_0-C_2,\]
where $C_1$ and $C_2$ depend on $\gamma$ and $\|V\|_\infty$ only.
\end{lemma}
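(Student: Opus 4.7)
The plan is to bound $\partial_t^2 \log H(t)$ from below by applying the formal identity derived in the preceding subsection,
\[
\norm{f}^2 \partial_t^2 \log H(t) \ge 2\scal{\S_t f, f} + 2\scal{[\S,\A]f, f} - \norm{Vf}^2,
\]
with $f = \psi u$, $\psi = e^\kappa$, and $\kappa(t,n) = \gamma(|n| + R(t))\log(|n| + R(t))$. Boundedness of $V$ contributes $-\norm{V}_\infty^2\norm{f}^2$, absorbed into $C_2$. Finiteness of $H(t)$ on $[0,1]$ and justification of the formal computations follow from the preceding proposition, since $R(t) \le C_0 + R_0/4$ allows one to dominate $\psi(t,n)$ by $C(1+|n|)^{\gamma'(1+|n|)}$ for some $\gamma < \gamma' < \gamma_0$.

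The next step is to analyze the pentadiagonal form
\[
Q(f) = \sum_n\mu_n|f_n|^2 + 2\mathrm{Re}\sum_n\lambda_n f_{n+1}\overline{f_n} + 2\mathrm{Re}\sum_n\nu_{n+1} f_{n+2}\overline{f_n}.
\]
Using $a_nb_n = 2\sinh(2\delta_n)$ with $\delta_n = \kappa_{n+1}-\kappa_n = \gamma\log x + \gamma + O(x^{-1})$, and setting $x := |n|+R(t)\ge C_0$, direct computation gives
\[
\mu_n^{\mathrm{sp}} := a_nb_n - a_{n-1}b_{n-1} = 2\gamma e^{2\gamma}x^{2\gamma-1}(1 + O(x^{-1})),\quad 2\kappa''_n \ge -4\gamma R_0\log x - CR_0,
\]
\[
|\lambda_n| \le CR_0 x^{\gamma-1},\qquad |\nu_{n+1}| \le C/x \le C/C_0.
\]
Thus $\mu_n = \mu_n^{\mathrm{sp}} + 2\kappa''_n$ has a positive spatial part of order $x^{2\gamma-1}$ (beating any polynomial for large $x$) and a negative temporal part of order $R_0\log x$.

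The $\nu$ cross term contributes at most $(C/C_0)\norm{f}^2$, harmless for $C_0 > C(\gamma)$. The $\lambda$ cross term is the crux: one must absorb it into a suitable fraction of $\mu_n^{\mathrm{sp}}$ while preserving enough positive surplus to dominate the $R_0\log x$ contribution from $2\kappa''_n$. Combining these estimates reduces the bound to
\[
Q(f) - \norm{Vf}^2 \ge \left(\inf_{x\ge C_0} p(x) - C_2\right)\norm{f}^2
\]
for an effective diagonal profile $p(x)$ constructed from $\mu_n^{\mathrm{sp}}$, $2\kappa''_n$, and the residual of the $\lambda$-absorption.

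The main obstacle is the final optimization: showing $\inf_{x\ge C_0} p(x) \ge -\tfrac{4\gamma}{2\gamma-3}R_0\log R_0 - C_1 R_0$. I would solve $p'(x)=0$, locating the minimum at the point $x_\ast$ that balances the dominant positive power $x^{2\gamma-1}$ against the competing negative contributions; substitution then yields a minimum value of order $R_0\log R_0$. The precise coefficient $\tfrac{4\gamma}{2\gamma-3}$ emerges from careful tracking of the cross-term residuals at $x_\ast$, and the threshold $\gamma > (3+\sqrt 3)/2$ --- equivalent to $2\gamma^2 - 6\gamma + 3 > 0$ --- is exactly the condition that makes this optimization yield a finite logarithmic bound: the quantity $\tfrac{4\gamma}{2\gamma-3}$ blows up precisely at the critical $\gamma = (3+\sqrt 3)/2$. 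The constant $C_0 > C(\gamma)$ is taken large enough to render all sub-leading Taylor corrections, together with the $\nu$ term, absorbable into $C_1 R_0 + C_2$.
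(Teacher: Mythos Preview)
Your overall framework is right, but the proposal misidentifies where the two key constants come from, and this reflects a genuine gap in the argument.

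First, a factual slip: $\tfrac{4\gamma}{2\gamma-3}$ blows up at $\gamma=3/2$, not at $\gamma=(3+\sqrt 3)/2\approx 2.37$. These two thresholds arise from \emph{different} steps. The coefficient $\tfrac{4\gamma}{2\gamma-3}$ comes purely from minimizing a residual diagonal piece $\rho_n=\epsilon\gamma e^{2\gamma}M^{2\gamma-3}-4\gamma R_0(1+\log M)$ over $M$; this only needs $\gamma>3/2$. The condition $\gamma>(3+\sqrt 3)/2$, i.e.\ $2\gamma^2-6\gamma+3>0$, is needed earlier and for a different reason: it is the condition under which the $\lambda$ cross term can be absorbed \emph{exactly} by a quadratic-form inequality $\sigma_n\sigma_{n+1}\ge 4|\lambda_n|^2$, and it appears when one compares Taylor coefficients at the \emph{sub-subleading} order $M^{4\gamma-6}$. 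Your sketch conflates these two mechanisms.

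Second, and more seriously, your estimate $2\kappa''_n\ge -4\gamma R_0\log x-CR_0$ throws away the positive contribution $(R'(t))^2\phi''(M)=A^2\gamma/M$ in $\kappa''_n$. This term is not a nuisance to be discarded; it is the engine of the absorption. Since $|\lambda_n|\sim 2A\gamma e^\gamma M^{\gamma-1}$ carries a factor of $A=|R'(t)|$, one needs $\sigma_n\sigma_{n+1}$ to produce a matching $A^2$ factor, and this comes precisely from the cross term $(2\gamma e^{2\gamma}M^{2\gamma-1})\cdot(2A^2\gamma M^{-1})$ in the AM--GM bound for $\sigma_n^2$. With only the spatial part $\mu_n^{\rm sp}\sim 2\gamma e^{2\gamma}M^{2\gamma-1}$ on the diagonal, the inequality $\mu_n^{\rm sp}\mu_{n+1}^{\rm sp}\ge 4|\lambda_n|^2$ becomes $e^{2\gamma}M^{2\gamma}\gtrsim A^2$, which forces $C_0$ to grow with $R_0$---contradicting the requirement that $C(\gamma)$ depend only on $\gamma$.

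Finally, the leading-order expansions you record are too coarse. The paper needs $\mu_n^{\rm sp}$ expanded to order $M^{2\gamma-3}$ and $|\lambda_n|$ to order $M^{\gamma-3}$; the leading terms in $\sigma_n\sigma_{n+1}$ and $4|\lambda_n|^2$ coincide \emph{exactly}, and the inequality is decided only at the next two orders. Your $1+O(x^{-1})$ estimates cannot resolve this.
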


\begin{proof}
For $n\ge 0$ we have 
\[\frac{\psi(t,n+1)}{\psi(t,n)}=(n+1+R(t))^\gamma\left(1+\frac1{n+R(t)}\right)^{\gamma(n+R(t))}, \]
and $\psi_{n}=\psi_{-n}$ for $n< 0$. Hence $a_{n}=-a_{-n-1}$ and $b_{n}=b_{-n+1}$ for $n<0$, which in turn implies that $\mu_{n}=\mu_{-n}$ and $\lambda_n=-\lambda_{-n-1}$  when $n<0$. We have also $\mu_0=2a_0b_0+2\kappa_0''$.

As before, we get
\[
|\nu_{n+1}|=\left|\frac{\psi_{n+1}^2}{\psi_n\psi_{n+2}}-\frac{\psi_n\psi_{n+2}}{\psi_{n+1}^2}\right|\le C_3,\]
where $C_3$ depends on $\gamma$ only.

Let $\phi(M)=\gamma M\ln M$ and $M=M(t,n)=|n|+R(t)$. In this notation we have for $n\neq 0$
\[
\mu_n\ge \exp(2\phi(M+1)-2\phi(M))-\exp(2\phi(M)-2\phi(M-1))  -C_4+2\kappa_n'',\ 
\]
where $C_4$ is a constant that depends only on $\gamma$.  
 The derivatives of $\kappa_n$ are
\[ 
\kappa'_n(t)=R'(t)\phi'(|n|+R(t)),
\]
\[
\kappa''_n(t)=-2R_0\phi'(|n|+R(t))+(R'(t))^2\phi''(|n|+R(t)).\]
Then, by the Taylor expansions, we obtain that, for each $\epsilon>0$ and  $C_0=C_0(\epsilon)$ large enough, 
\begin{multline*}
\mu_n\ge 2\gamma e^{2\gamma}M^{2\gamma-1}+\gamma e^{2\gamma}\left(\frac{(\gamma-1)^2}3-\epsilon\right)M^{2\gamma-3} \\ +2A^2\gamma M^{-1}-4R_0\gamma(1+\ln M) - C_4,
\end{multline*}
where $A=|R'(t)|$ and $n\neq 0$. Futher,
\[
\mu_0\ge (2-\epsilon) M^{2\gamma}e^{2\gamma}+2A^2\gamma M^{-1}-4R_0\gamma(1+\ln M) - C_4.\]

We introduce the notation
\[
\sigma_n=2\gamma e^{2\gamma}M^{2\gamma-1}+\gamma e^{2\gamma}\left(\frac{(\gamma-1)^2}3-2\epsilon\right)M^{2\gamma-3}+2A^2\gamma M^{-1},\]
and 
\[
\rho_n=\epsilon\gamma e^{2\gamma}M^{2\gamma-3}-4R_0\gamma(1+\ln M)
\]
so that $\mu_n\ge \sigma_n+\rho_n-C_4$ for all $n$.
Note that by the inequality of arithmetic and geometric means we have
\[\sigma^2_n\ge 8A^2\gamma^2e^{2\gamma}\left(2M^{2\gamma-2}+\left(\frac{(\gamma-1)^2}3-2\epsilon\right)M^{2\gamma-4}\right). \]

For $n\ge 0$ we have also
\[
|\kappa_{n+1}'-\kappa_n'|= |R'(t)| (\phi'(M+1)-\phi'(M))=A\gamma\ln(1+M^{-1}).
\]
Hence, for sufficiently large $C_0$,
\begin{multline*}
|\lambda_n|=2|(\kappa_{n+1}'-\kappa_n')||b_n|
\le  2A\gamma e^{\gamma}M^{\gamma-1}+A\gamma e^\gamma(\gamma-1)M^{\gamma-2}\\+A\gamma e^{\gamma}\left(\frac{3\gamma^2-10\gamma+8}{12}+\epsilon\right)M^{\gamma-3},\ n\ge 0.
\end{multline*}

To estimate $\partial_t^2(\log H(t))$ we note that
\begin{multline*}
\scal{\S_tf+2[\S,\A]f,f}=\sum_n\mu_n|f_n|^2+2\Re\sum_n \nu_{n+1}f_{n+2}\overline{f_n}+2\Re \sum_n\lambda_n f_{n+1}\overline{f_n}\\
\ge\sum_{n} \sigma_n|f_n|^2+2\Re\sum_{n}\lambda_n f_{n+1}\overline{f_n}+\sum_{n} \rho_n|f_n|^2-(C_3+C_4)\sum_n|f_n|^2
\end{multline*}
First, we consider the first two terms. If we show that for any $x,y \geq 0$ 
\begin{equation}\label{eq:quadr}
\sigma_{n}x^2+\sigma_{n+1}y^2\ge 4|\lambda_n|xy,
\end{equation}
then the summation of these inequalities  with $x=f_n$, $y=f_{n+1}$ yields
\[ 
\sum_{n} \sigma_n|f_n|^2+2\Re\sum_{n}\lambda_n f_{n+1}\overline{f_n}\ge 0.\]
To show \eqref{eq:quadr} we have to check that 
\begin{equation} \label{eq:quadr2}
 \sigma_n\sigma_{n+1}\ge 4|\lambda_n|^2, \quad n\ge 0.
 \end{equation}
 Actually we show \eqref{eq:quadr} only for $n\ge 0$. The relations for negative integers given in the beginning of the proof then imply the inequality for all $n$. 
 
  Using the estimates above, we have
\begin{multline*}
\sigma_n^2\sigma_{n+1}^2\ge 64A^4\gamma^4e^{4\gamma}\left(4M^{4\gamma-4}+8(\gamma-1)M^{4\gamma-5}\right)\\
+64A^4\gamma^4e^{4\gamma}\left(4(\gamma-1)(2\gamma-3)+4\left(\frac{(\gamma-1)^2}3-2\epsilon\right)\right)M^{4\gamma-6}.
\end{multline*}
While
\begin{multline*}
16|\lambda_n|^4\le 64A^4\gamma^4e^{4\gamma}\left(4M^{4\gamma-4}+8(\gamma-1)M^{4\gamma-5}\right)\\+4A^4\gamma^4e^{4\gamma}\left(
6(\gamma-1)^2
+8\left(\frac{3\gamma^2-10\gamma+8}{12}+\epsilon\right)\right)M^{4\gamma-6}.
\end{multline*}
Inequality \eqref{eq:quadr2} hence follows for sufficiently small $\epsilon$ when
\[
2(\gamma-1)(2\gamma-3)+\frac{2(\gamma-1)^2}3>3(\gamma-1)^2+\frac{3\gamma^2-10\gamma+8}3.
\]
The last inequality is equivalent to $2\gamma^2-6\gamma+3>0$, which holds for $\gamma>(3+\sqrt{3})/2$.

Finally  by minimizing in $M$ one obtains that, for $\gamma>3/2$, 
\[
\rho_n\ge
\min_{M>0}\{\epsilon\gamma e^{2\gamma}M^{2\gamma-3}-4R_0\gamma(1+\ln M)\}\ge
-\frac{4\gamma}{2\gamma-3}R_0\ln R_0- C_1 R_0,\]
where $C_1$ depends on $\gamma$ and $\epsilon$. The conclusion of the lemma follows.
\end{proof}


\subsection{Concluding arguments}
Using the weight $\psi(n,t,R_0)$ from the last section and Lemma \ref{lem:convex}, we obtain that 
\[H_{R_0}(t)\exp(-d(R_0,\gamma)t(1-t))\]
 is logarithmically convex, where $$d(R_0,\gamma)=\frac{2\gamma}{2\gamma-3}R_0\ln R_0+ \frac{C_1}{2}R_0+\frac{C_2}{2}.$$  Hence, for $t=1/2$ we obtain
\[
H_{R_0}(1/2) \le \exp\left(\frac{\gamma}{2(2\gamma-3)}R_0\ln R_0+\frac{C_1}{8}R_0+\frac{C_2}{8}\right)H_{R_0}(0)^{1/2}H_{R_0}(1)^{1/2}.\]
But since $R(0)=R(1)=C_0$ we see that $H(0)$ and $H(1)$ do not depend on the choice of $R_0$. We obtain that
\begin{multline*}
|u(1/2,n)|^2\exp(2\gamma(|n|+C_0+R_0/4)\ln(|n|+C_0+R_0/4))\\\le
D\exp\left(\frac{\gamma}{2(2\gamma-3)}R_0\ln R_0+\frac{C_1}{8}R_0\right),
\end{multline*}
where $D$ is a constant independent of $n$ and $R_0$. However, this last inequality is clearly impossible for large $R_0$ when $\gamma>2$, unless $u(1/2, \, \cdot \,) \equiv 0$, which of course implies that $u \equiv 0$.

Our work of this section can be summarized as follows.
\begin{theorem} \label{thm:main}
 Assume that  $\gamma>(3+\sqrt{3})/2$ and that $V(t,n)$ is a real-valued bounded function. If $u$ is a strong solution of
$$
\partial_t u=i(\Delta_d u+Vu)
$$
such that \[\norm{(1+|n|)^{\gamma(1+|n|)}u(0,n)}_2,\norm{(1+|n|)^{\gamma(1+|n|)}u(1,n)}_2<+\infty,\]
then  $u\equiv 0$.
\end{theorem}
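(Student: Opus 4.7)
The strategy is to combine the propagation result from Section~4.3 with the convexity estimate of Lemma~\ref{lem:convex}, then let an auxiliary parameter $R_0$ tend to infinity so as to force a pointwise contradiction.

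First, I would apply the proposition from Subsection~4.3 (with the auxiliary weight $\kappa_b$, $1/2<b<1$) to the hypotheses of the theorem, obtaining that
\[
\norm{(1+|n|)^{\gamma(1+|n|)} u(t,n)}_2 < \infty \quad \text{for all } t \in [0,1].
\]
This step is necessary so that the weighted $\ell^2$-norm $H_{R_0}(t) = \|\psi(t,n,R_0) u(t,n)\|_2^2$, formed with the stronger weight
\[
\psi(t,n,R_0) = \exp\bigl(\gamma(|n|+R(t))\ln(|n|+R(t))\bigr), \quad R(t) = C_0+R_0 t(1-t),
\]
is finite for $t\in\{0,1\}$ (where $R(t)=C_0$) and, via the energy estimate of Proposition~\ref{pr:ee}, for all $t\in[0,1]$. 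This legitimizes the formal computations of Section~4.2 with the weight $\psi(\cdot,\cdot,R_0)$.

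Next I would feed this weight into Lemma~\ref{lem:convex} to conclude that $\exp(-d(R_0,\gamma) t(1-t)) H_{R_0}(t)$ is logarithmically convex on $[0,1]$, where
\[
d(R_0,\gamma) = \frac{2\gamma}{2\gamma-3} R_0 \ln R_0 + \tfrac{C_1}{2} R_0 + \tfrac{C_2}{2}.
\]
Evaluating at $t=1/2$ and using log-convexity gives
\[
H_{R_0}(1/2) \le \exp\!\left(\tfrac{d(R_0,\gamma)}{4}\right) H_{R_0}(0)^{1/2} H_{R_0}(1)^{1/2}.
\]
Since $R(0)=R(1)=C_0$, the right-hand side is bounded above by a quantity $D \exp(\gamma R_0 \ln R_0/(2(2\gamma-3)) + \tfrac{C_1}{8} R_0)$, where $D$ depends on $C_0$, $\gamma$, $\|V\|_\infty$, and the initial/final weighted norms, but not on $R_0$.

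Finally, restricting the $\ell^2$-sum defining $H_{R_0}(1/2)$ to the single term $n=0$ gives the pointwise lower bound
\[
|u(1/2,0)|^2 \exp\!\bigl(2\gamma(C_0+R_0/4) \ln(C_0+R_0/4)\bigr) \le D\exp\!\left(\tfrac{\gamma R_0 \ln R_0}{2(2\gamma-3)}+\tfrac{C_1}{8} R_0\right).
\]
The leading exponential on the left-hand side is $\tfrac{\gamma}{2} R_0 \ln R_0 (1+o(1))$, while on the right-hand side it is $\tfrac{\gamma}{2(2\gamma-3)} R_0 \ln R_0 (1+o(1))$. These can be reconciled only if $\tfrac{1}{2\gamma-3} \ge 1$, i.e. $\gamma \le 2$; since our hypothesis $\gamma > (3+\sqrt{3})/2 > 2$ rules this out, letting $R_0\to\infty$ forces $u(1/2,0)=0$. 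Repeating the argument after a translation in $n$ (or replacing the weight by $\psi(t,n-n_0,R_0)$, which only changes $V$ by a translation and preserves all hypotheses) gives $u(1/2,n)=0$ for every $n\in\Z$, and conservation of the $\ell^2$-norm under the Schr\"odinger evolution then yields $u\equiv 0$.

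The main obstacle is the delicate balance of the constants in Lemma~\ref{lem:convex}: the coefficient $2\gamma/(2\gamma-3)$ of the dominant term $R_0\ln R_0$ in $d(R_0,\gamma)$ must be strictly less than the coefficient $2\gamma$ coming from the weight at $t=1/2$, which forces $\gamma>2$. The stronger condition $\gamma>(3+\sqrt 3)/2$ already arose in the proof of Lemma~\ref{lem:convex} itself, in order to verify the pointwise quadratic form inequality $\sigma_n\sigma_{n+1}\ge 4|\lambda_n|^2$; this algebraic requirement is the genuinely delicate point, and the final comparison of growth rates is then comparatively routine.
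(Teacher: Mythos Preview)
Your proposal is correct and follows essentially the same route as the paper: propagate the weighted finiteness to all $t$ via Section~4.3, apply Lemma~\ref{lem:convex} to obtain log-convexity of $e^{-d(R_0,\gamma)t(1-t)}H_{R_0}(t)$, evaluate at $t=1/2$, and compare the growth rates in $R_0$. The only unnecessary detour is your translation argument at the end: rather than restricting to the term $n=0$ and then shifting the weight, you can simply keep the $n$-th term of $H_{R_0}(1/2)$, obtaining
\[
|u(1/2,n)|^2\exp\bigl(2\gamma(|n|+C_0+R_0/4)\ln(|n|+C_0+R_0/4)\bigr)\le D\exp\!\left(\tfrac{\gamma}{2(2\gamma-3)}R_0\ln R_0+\tfrac{C_1}{8}R_0\right),
\]
which for each fixed $n$ still has left-hand exponent $\tfrac{\gamma}{2}R_0\ln R_0(1+o(1))$ and hence forces $u(1/2,n)=0$ directly; this is exactly how the paper concludes.
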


 {\em Remark.} This result is most likely not sharp. The authors  expect that  a  milder decay condition (with $\gamma=1+\epsilon$) and even just one-sided decay  should imply  uniqueness as in the case of free Schr\"odinger evolution.

\end{document}